\numberwithin{equation}{section}
\theoremstyle{plain}
\newtheorem{theorem}[subsection]{Theorem}
\newtheorem{corollary}[subsection]{Corollary}
\theoremstyle{definition}
\newtheorem{remark}[subsection]{Remark}
\newtheorem{example}[subsection]{Example}
\newtheorem*{ack}{Acknowledgment}
\renewcommand{\o}[1]{\overline{#1}}
\newcommand{\A}{\mathcal{A}}
\newcommand{\I}{\mathcal{I}}
\newcommand{\Ca}{\mathcal{C}}
\newcommand{\Z}{\mathbb{Z}}
\newcommand{\Q}{\mathbb{Q}}
\newcommand{\K}{\mathbb{K}}
\newcommand{\GG}{\Gamma}
\newcommand{\wX}{\widetilde{X}}
\newcommand{\wY}{\widetilde{Y}}
\newcommand{\wA}{\widehat{\A}}
\newcommand{\wR}{\widehat{\mathsf{R}}}
\newcommand{\OS}{\operatorname{A}^\bullet}
\newcommand{\DOS}{\operatorname{A}_{+}^{\bullet}}
\newcommand{\dOS}{\operatorname{A}_{+}}
\newcommand{\QOS}{\o{\operatorname{A}}^\bullet}
\newcommand{\Rr}{\mathsf{R}}
\DeclareMathOperator{\rank}{rank}
\DeclareMathOperator{\coker}{coker}
\DeclareMathOperator{\id}{id}
\DeclareMathOperator{\pr}{pr}
\DeclareMathOperator{\im}{im}
\DeclareMathOperator{\gr}{gr}
\begin{document}

\title[Nilpotent quotients of higher homotopy groups]%
{On the second nilpotent quotient of higher homotopy groups, for hypersolvable
arrangements}

\author[A.~D.~Macinic]{Daniela Anca~Macinic$^1$}
\address{Simion Stoilow Institute of Mathematics, 
P.O. Box 1-764,
RO-014700 Bucharest, Romania}
\email{Anca.Macinic@imar.ro}
\thanks{$^1$ Supported by a grant of the Romanian National Authority for Scientific
Research, CNCS  UEFISCDI, project number PN-II-RU-PD-2011-3-0149} 

\author[D.~Matei]{Daniel~Matei$^2$}
\address{Simion Stoilow Institute of Mathematics, 
P.O. Box 1-764,
RO-014700 Bucharest, Romania}
\email{Daniel.Matei@imar.ro}
\thanks{$^2$ Partially supported by
the Romanian Ministry of 
National Education, CNCS-UEFISCDI, grant PNII-ID-PCE-2012-4-0156}

\author[S.~Papadima]{Stefan~Papadima$^3$}
\address{Simion Stoilow Institute of Mathematics, 
P.O. Box 1-764,
RO-014700 Bucharest, Romania}
\email{Stefan.Papadima@imar.ro}
\thanks{$^3$ Partially supported by
the Romanian Ministry of 
National Education, CNCS-UEFISCDI, grant PNII-ID-PCE-2012-4-0156}

\subjclass[2010]{Primary 52C35, 55Q52; Secondary 16S37, 20C07}

\keywords{homotopy groups, hyperplane arrangement,
hypersolvable, supersolvable, minimality, Orlik--Solomon ideal,
completion, graphic arrangement}

\date{October 17, 2013}

\begin{abstract}
We examine the first non-vanishing higher homotopy group, $\pi_p$, of the complement
of a hypersolvable, non--supersolvable, complex hyperplane arrangement,
as a module over the group ring of the fundamental group, $\Z\pi_1$.
We give a presentation for the $I$--adic completion of $\pi_p$.
We deduce that the second nilpotent $I$--adic quotient of $\pi_p$
is determined by the combinatorics of the arrangement, and we give a
combinatorial formula for the second associated graded piece, $\gr^1_I  \pi_p$.
We relate the torsion of this graded piece to the dimensions of the minimal generating
systems of the Orlik--Solomon ideal of the arrangement $\A$ in degree $p+2$, for various field coefficients.
When $\A$ is associated to a finite simple graph, we show that $\gr^1_I  \pi_p$
is torsion--free, with rank explicitly computable from the graph.
\end{abstract}

\maketitle

\tableofcontents

\section{Introduction}
\label{sec:intro}

\subsection{Overview}
\label{ssi1}

The {\em hypersolvable} class introduced in \cite{JP1}, \cite{JP2} is well adapted for 
homotopy computations with combinatorial flavour; see \cite{PS}, \cite{DP}.

Let $X$ be a path-connected space, with fundamental group $\pi_1 := \pi_1(X)$.
The higher homotopy groups of $X$ have a natural module structure over the group ring,
$\Rr := \Z \pi_1$. In general, their computation can be an extremely difficult problem.
When $X$ is not aspherical, homological methods may be used to tackle the first higher
non-trivial homotopy group, $\pi_p := \pi_p(X)$, by Hurewicz. This $\Rr$-module is still very
hard to describe,  when $\pi_1$ is non-trivial. Let $I \subseteq \Rr$ be the augmentation ideal. 
A reasonable idea is to approximate $\pi_p$ by its {\em nilpotent quotients},
$\pi_p/I^q \pi_p$ (for $q\ge 1$), or by the {\em associated graded} module
over $\gr_I^\bullet \Rr$, $\gr_I^\bullet \pi_p := \oplus_{q\ge 0} (I^q \pi_p/I^{q+1} \pi_p)$.

Now, let $\A$ be a central, hypersolvable, complex hyperplane arrangement, with affine complement denoted $X$.
For homotopy computations on $X$,
we may also assume $\A$ is essential. As shown in \cite{JP1}, $X$ is aspherical if and only if 
$\A$ is fiber-type (supersolvable). So, we also assume that $\A$ is not supersolvable.

Let $p(\A)$ be the order of $\pi_1$-connectivity 
of $X$, introduced in \cite{PS}, and let $r(\A)$ be the rank of the arrangement. 
We know that $2 \leq p \coloneqq p(\A) < r \coloneqq r(\A)$, and both $p$ and $r$ are combinatorial
(i.e., they depend only on the intersection lattice of $\A$). According to  \cite{PS},
$\pi_p=\pi_p(X)$ is the first higher non--trivial homotopy group of $X$.
It is also known that both 
$\gr_I^\bullet \Z \pi_1 $ and the first graded piece (nilpotent quotient) $\gr_I^0 \pi_p $ are 
combinatorial and torsion-free.

In \cite{DP}, the case when $p$ is maximal, i.e., $p=r-1$, was analyzed. 
It turned out that the $\gr_I^\bullet \Z \pi_1 $--module $\gr_I^\bullet \pi_p$  
is torsion-free, given by an explicit combinatorial  formula. 
Unfortunately, this formula does not hold, in general.

Here, we aim at removing the additional hypothesis on $p$, 
and see what can be said about $\pi_p$. 

\subsection{Results}
\label{ssi2}

Set $\Rr_q \coloneqq \Z\pi_1 /\ I^q$, for $1\le q < \infty$, and $\Rr_ \infty \coloneqq     \widehat{\Z \pi_1}$, 
where $\widehat{\Z \pi_1}$ is the $I$--adic completion of $\Z \pi_1$.
The first main results converge to a convenient $\Rr_q$--presentation of 
$\pi_p \otimes_{\Z\pi_1} \Rr _q$, for $q \leq \infty$. 
These are given in Theorem \ref{th:cpi} (for $q=\infty$) and Corollaries \ref{cor:Ipi}, \ref{cor:Iqpi} (for $q<\infty$). 
Note that $\pi_p \otimes_{\Z\pi_1} \Rr _q$ is the $q$-th nilpotent quotient, $\pi_p/I^q \pi_p$, for $q< \infty$.
When $q=2$, both  the second nilpotent quotient 
$\pi_p \otimes_{\Z\pi_1} \Rr _2$ and the second graded piece
$\gr ^1_I \pi_p$ have an explicit combinatorial formula, derived in Theorem \ref{th:cpi2}.

The second type of main results is related to torsion in $\gr_I^1 \pi_p$. 
It turns out that this problem leads to a basic question in combinatorial 
arrangement theory; compare  with \cite{OT}, \cite{Y}, \cite{SS}, \cite{CF}. 
Let $\Lambda^\bullet := \Lambda^\bullet (\A)$ be the exterior algebra over $\Z$
generated by the set of hyperplanes of an arbitrary arrangement $\A$.
Let $\I^\bullet := \I^\bullet (\A) \subseteq \Lambda^\bullet $ be the Orlik-Solomon 
ideal of $\A$, and denote by $\OS  (\A)=\Lambda /\I$ the 
Orlik-Solomon algebra over $\Z$, known to be torsion-free. 
By a celebrated result of Orlik and Solomon, the $\K$--specialization $\OS  (\A)_{\K}$
is isomorphic to the $\K$--cohomology ring of the affine complement of $\A$, for every
commutative ring $\K$.

Let $\Lambda^{+} \I \subseteq \I$ be the decomposable Orlik-Solomon ideal. We introduce 
$\DOS(\A) :=\Lambda /\Lambda^{+} \I$, the {\em decomposable Orlik-Solomon algebra}. 
Is $\DOS(\A)$ also torsion-free? At the time of writing, we have no example 
where torsion appears. When $\A$ is hypersolvable and not supersolvable, we show in Theorem 
\ref{th:tors} that $\gr_I^1 \pi_p$ is torsion-free precisely when $\dOS^{p+2} (\A)$  has no torsion.

Consider the quadratic Orlik-Solomon algebra, $\QOS (\A) \coloneqq \Lambda /\I_2$, 
where $\I_2$ is the ideal generated by $\I^2$, the degree $2$ component of $\I$. 
When $\A$ is supersolvable, it is known that $\OS(\A) = \QOS (\A)$, 
see \cite{SY}; hence, in this case, $\DOS (\A)$ has no torsion.

When $\A$ is hypersolvable, not supersolvable, and {\em graphic}
(i.e., a subarrangement of a braid arrangement, associated to a finite simple graph), 
we prove in Corollary \ref{gr1_graphic} that $\gr_I^1 \pi_p$ has a simple description in terms of the graph, 
in particular it has no torsion. The graph from Example \ref{extension_result} shows that this combinatorial description may
be done outside the maximal range $p=r-1$ from \cite{DP}. 

\subsection{Questions}
\label{ssi3}

We are left with  some open questions concerning hypersolvable arrangements. 
Is $\DOS (\A)$  torsion-free, at least in degree $\bullet=p+2$? 
What if we restrict the question to {\it 2--generic} arrangements
(i.e., arrangements with no collinearity relations, known to be hypersolvable)?
See also Remark \ref{q:arbtors} on arbitrary arrangements.

\section{A preliminary module presentation}
\label{sec:prel}

We shall work in the context of \cite[Sections 5 and 6]{DP}. Let $\A$ be a hypersolvable 
complex hyperplane arrangement which is not supersolvable, and $X=M'(\A)$ 
its complement in affine space.

We know that $\A$ is a $p$-generic section of its supersolvable deformation, $\wA$.
Set $Y=M'(\wA)$, and let $j:X\hookrightarrow Y$ denote the inclusion. Denote by $\pi_1$ 
the fundamental groups identified through the induced map $j_{\sharp}:\pi_1(X)\to\pi_1(Y)$.
Let $\tilde{j}:\wX\to\wY$ be the $\pi_1$-equivariant map induced on universal covers. 
Denote by $\tilde{j}_\bullet: C_\bullet(\wX)\to C_\bullet(\wY)$ the $\Z\pi_1$-linear 
chain map between the $\pi_1$-equivariant cellular chains on the universal covers, 
and by $j_\bullet:H_\bullet(X)\to H_\bullet(Y)$ the induced map in integral homology.

We have split exact sequences of finitely generated free abelian groups,
\begin{equation}\label{eq:hp}
0\to H_\bullet(X)\xrightarrow{j_\bullet}H_\bullet(Y)\xrightarrow{\Pi_\bullet}
H_\bullet(Y,X)\to 0,
\end{equation}
whose duals,
\begin{equation}\label{eq:chp}
0\to H^\bullet(Y,X)\xrightarrow{\Pi^\bullet}H^\bullet(Y)\xrightarrow{j^\bullet}
H^\bullet(X)\to 0,
\end{equation}
may be described in purely combinatorial terms: $j^\bullet$ may be identified with the 
canonical surjection,
\begin{equation}\label{eq:osm}
j^\bullet:\QOS(\A)\twoheadrightarrow\OS(\A),
\end{equation}
between Orlik-Solomon algebras.

For simplicity, in the sequel we set $\Rr \coloneqq \Z\pi_1$. Note that 
$C_\bullet(\wX)=H_\bullet(X)\otimes \Rr$ and $C_\bullet(\wY)=H_\bullet(Y)\otimes \Rr$, 
as $\Rr$-modules, by the minimality property for arrangement complements \cite[Corollary 6]{DP}.

Denoting by $\tilde\partial_{\bullet}:C_\bullet(\wY)\to C_{\bullet-1}(\wY)$ 
the differential on the equivariant chain complex of $\wY$, we have the following.

\begin{theorem}\emph{(\cite{DP})}\label{th:pip}
The $\Rr$-module $\pi_p$ is isomorphic to the cokernel of the $\Rr$-linear map
\[
\tilde\partial_{p+2} + \tilde j_{p+1}:(H_{p+2}Y\oplus H_{p+1}X)\otimes \Rr\to H_{p+1}Y\otimes \Rr.
\]
\end{theorem}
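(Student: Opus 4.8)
The plan is to compute $\pi_p$ homologically, using the fact that $p$ is the order of $\pi_1$-connectivity and that the relevant spaces are minimal CW-complexes. Since $X$ is $(p-1)$-connected modulo $\pi_1$ and the inclusion $j \colon X \hookrightarrow Y$ induces an isomorphism on $\pi_1$, the Hurewicz theorem (in its equivariant, or relative, form) identifies $\pi_p(X)$ with a homology group of the universal cover. Concretely, $\pi_p(X) \cong H_p(\widetilde{X}) \cong \pi_p(\widetilde{X})$, and because $Y$ is aspherical (it is the complement of the supersolvable arrangement $\widehat{\A}$), $\widetilde{Y}$ is contractible, so $H_q(\widetilde Y)=0$ for $q>0$. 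The strategy is therefore to feed the pair $(\widetilde Y,\widetilde X)$ through its long exact homology sequence and read off $\pi_p$ as a relative homology group, then reinterpret that group as a cokernel via an explicit chain-level description.

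First I would set up the mapping cone of $\tilde j_\bullet \colon C_\bullet(\widetilde X)\to C_\bullet(\widetilde Y)$, whose homology computes $H_\bullet(\widetilde Y,\widetilde X)$ (equivalently $H_\bullet$ of the cofiber of $j$ lifted to the cover). Using the minimality identifications $C_\bullet(\widetilde X)=H_\bullet(X)\otimes\Rr$ and $C_\bullet(\widetilde Y)=H_\bullet(Y)\otimes\Rr$ as free $\Rr$-modules, the relevant portion of the cone complex in degrees around $p+2$ is
\[
(H_{p+2}Y\oplus H_{p+1}X)\otimes\Rr \xrightarrow{\ \tilde\partial_{p+2}+\tilde j_{p+1}\ } H_{p+1}Y\otimes\Rr \xrightarrow{\ \tilde\partial_{p+1}+\tilde j_p\ }\cdots .
\]
The connectivity hypothesis on $X$ forces $H_q(\widetilde X)=0$ for $0<q<p$, and asphericity of $Y$ forces $H_q(\widetilde Y)=0$ for $q>0$; feeding these into the long exact sequence of the pair shows $H_{q}(\widetilde Y,\widetilde X)=0$ for $q\le p$ and $H_{p+1}(\widetilde Y,\widetilde X)\cong H_p(\widetilde X)\cong\pi_p$. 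On the other hand, the same vanishing identifies $H_{p+1}(\widetilde Y,\widetilde X)$ with the homology of the cone complex at the spot $H_{p+1}Y\otimes\Rr$, which — again using that the relevant higher differentials are either zero or surjective in the appropriate degrees by minimality — is exactly the cokernel of $\tilde\partial_{p+2}+\tilde j_{p+1}$. Composing these isomorphisms gives the claimed presentation of $\pi_p$ as an $\Rr$-module.

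The main obstacle I expect is the bookkeeping needed to make the last step — identifying $H_{p+1}$ of the cone with a plain cokernel — fully rigorous: one must check that the incoming differential $\tilde\partial_{p+1}+\tilde j_p$ at $H_{p+1}Y\otimes\Rr$ does not cut down the group, i.e.\ that the cone complex has no homology just above degree $p+1$ except what is accounted for, and that the minimality of the cell structures indeed yields the splittings \eqref{eq:hp}, \eqref{eq:chp} compatibly with the equivariant chain maps. This is where the hypotheses that $\A$ is hypersolvable (so $\A$ is a $p$-generic section of its supersolvable deformation $\widehat\A$, controlling the low-degree behaviour of $j_\bullet$) and not supersolvable (so that $p<r$ and $\pi_p$ is genuinely the first nonzero higher homotopy group) are used decisively. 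Since the statement is quoted from \cite{DP}, I would cite the relevant sections there for the minimality and connectivity inputs and present the argument above as the assembly of those ingredients.
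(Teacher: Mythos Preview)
The paper does not prove this theorem; it simply quotes the statement from \cite{DP} and uses it as input for the rest of Section~\ref{sec:prel}. So there is no ``paper's own proof'' to compare against, and your proposal should be read as a reconstruction of the argument in \cite{DP}.

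Your overall strategy is the right one: $\pi_p\cong H_p(\widetilde X)$ by Hurewicz, $\widetilde Y$ is contractible since $Y$ is aspherical, and the long exact sequence of the pair gives $\pi_p\cong H_{p+1}(\widetilde Y,\widetilde X)$. The problem is in the passage from this relative homology group to the claimed cokernel. Your displayed ``cone complex'' is wrong: in degree $p+1$ the mapping cone of $\tilde j_\bullet$ is $(H_pX\oplus H_{p+1}Y)\otimes\Rr$, not $H_{p+1}Y\otimes\Rr$ alone, and the outgoing differential involves $\tilde j_p$ acting on the $H_pX$ summand, not on $H_{p+1}Y$. Consequently, $H_{p+1}$ of the cone is not literally a cokernel of the map in the statement, and the ``bookkeeping obstacle'' you anticipate is misidentified: the issue is not whether an outgoing differential ``cuts down'' the group, but how to eliminate the extra $H_pX\otimes\Rr$ summand.

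The missing ingredient is that $j_q\colon H_qX\to H_qY$ is an isomorphism for all $q\le p$ (this is exactly the $p$--generic section property), so both equivariant chain complexes are acyclic in degrees $1,\dots,p-1$ with $H_0=\Z$, and $C_\bullet(\widetilde Y)$ is acyclic also in degree $p$. A comparison--of--resolutions argument then shows that $\tilde j_p$ induces an isomorphism $Z_p(\widetilde X)\xrightarrow{\ \sim\ }Z_p(\widetilde Y)$ on $p$--cycles. Using this together with exactness of $C_\bullet(\widetilde Y)$ at degree $p+1$, one identifies
\[
\coker\bigl(\tilde\partial_{p+2}+\tilde j_{p+1}\bigr)\;\cong\;Z_p(\widetilde Y)\big/\tilde j_p\bigl(B_p(\widetilde X)\bigr)\;\cong\;Z_p(\widetilde X)\big/B_p(\widetilde X)\;=\;H_p(\widetilde X)\;\cong\;\pi_p,
\]
where the first isomorphism is induced by $\tilde\partial_{p+1}$ and the chain--map relation $\tilde\partial_{p+1}\circ\tilde j_{p+1}=\tilde j_p\circ\partial^X_{p+1}$. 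This step, not the one you flagged, is where the hypersolvable hypothesis (via $p$--genericity) and minimality are used decisively; you should make it explicit rather than fold it into ``bookkeeping''.
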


Due to $\Rr$-linearity, $\tilde j_{\bullet}$ respects the $I$-adic filtrations, 
i.e., it sends $H_\bullet X\otimes I^q$ into $H_\bullet Y\otimes I^q$, for all $q$. 
The associated graded $\gr^{\bullet}_I \Rr$--linear map,
\[
\gr^{\bullet}\tilde j_{\bullet}:H_\bullet X\otimes \gr^{\bullet}_I \Rr\to 
H_\bullet Y\otimes \gr^{\bullet}_I \Rr \, ,
\] 
is equal to $j_{\bullet}\otimes\id$, by minimality. 
 
Similar considerations are valid for $\tilde\partial_\bullet$: by minimality again, 
it sends $H_\bullet Y\otimes I^q$ into $H_{\bullet-1} Y\otimes I^{q+1}$, for all $q$. 
The associated graded $\gr^{\bullet}_I \Rr$--linear map is denoted 
\[
E_1^\bullet\tilde\partial_\bullet: H_\bullet Y\otimes \gr^{\bullet}_I \Rr\to
H_{\bullet-1} Y\otimes \gr^{\bullet+1}_I \Rr.
\]

To describe the action of $E_1^\bullet\tilde\partial_\bullet$ on the free 
$\gr^{\bullet}_I \Rr$-generators, $H_\bullet Y\otimes 1$, 
we recall that $\gr^0_I \Rr=\Z\cdot 1$, and $\gr^1_I \Rr$ is naturally identified 
with $(\pi_1)_{ab}=H_1(Y)$. We denote by $H_1$ both $H_1(X)$ and $H_1(Y)$,
identified via $j_1$. 

Now it follows from \cite[Section 6]{DP} that the restriction of 
$E_1^\bullet\tilde\partial_\bullet$ to 
$H_\bullet Y\equiv H_\bullet Y\otimes 1\subseteq H_\bullet Y\otimes\gr^0_I \Rr$, 
denoted 
\begin{equation}\label{eq:del}
\partial_\bullet: H_\bullet Y\to H_{\bullet-1} Y\otimes H_1,
\end{equation}
has dual, up to sign,
\begin{equation}\label{eq:delst}
\partial_\bullet^*: \o{\operatorname{A}}^{\bullet-1}(\A)\otimes \o{\operatorname{A}}^1 (\A) \to\QOS(\A),
\end{equation}
given by the multiplication of the quadratic OS-algebra.

The description \eqref{eq:del} of $E_1^\bullet\tilde\partial_\bullet$  is related to 
the spectral sequence associated to the equivariant chain complex of a $CW$-complex, 
analyzed in full generality in \cite{PS2}.

\section{Completion of the presentation}
\label{sec:compl}

In this section we pursue the following idea: Use completion constructions 
to simplify the presentation in Theorem \ref{th:pip}, more exactly, to 
replace $\tilde j_{p+1}$ by $j_{p+1}\otimes\id$, without altering 
$E_1^\bullet\tilde\partial_{p+2}$. We refer the reader to \cite[Chapitre III.2]{B} for
standard completion techniques.

We explain now how these work concretely. The ring $\wR$ is endowed with the
canonical, decreasing, complete, separated, and multiplicative filtration
$\{F^q\}_{q\ge 0}$, as $\wR=\varprojlim \Rr/I^q$. In addition, $\wR/F^q=\Rr/I^q$
and $\gr_F^q\wR=\gr_I^q \Rr$, for all $q$. Every right $\wR$-module $M$ 
has the canonical filtration $\{M\cdot F^q\}_{q\ge 0}$, and $\wR$-linear maps
preserve canonical filtrations. Furthermore, we have the following convenient
test, for an $\wR$-linear map $f$ between complete and separated modules: $f$
is an isomorphism if and only if $\gr_F^{\bullet}(f)$ is an isomorphism. These
facts will lead to the first property of the aforementioned replacement.

For the second property, let us notice that, given an arbitrary map in $\wR$-$Mod$,
$f:M\to N$, we have that $f(M\cdot F^q)\subseteq N\cdot F^{q+1}$ for all $q$
if and only if $\gr_F^{\bullet}(f)=0$. If this happens, $f$ induces a 
$\gr_F^{\bullet}\wR$-linear map, 
$E_1^{\bullet}f:\gr_F^{\bullet}M\to\gr_F^{\bullet+1}N$. 

Finally, there is the completion functor, 
$\widehat{(\cdot)}:\Rr$-$Mod\to\wR$-$Mod$, given by
$M\mapsto\widehat{M}=\varprojlim (M/M\cdot I^q)$. 
On free finitely generated $\Rr$-modules, $\widehat{(\cdot)}$ is naturally equivalent
with $(\cdot)\otimes_\Rr\wR$. More precisely, if $M=H\otimes \Rr$, where $H$ is a
finitely generated free abelian group, then $M\otimes_\Rr\wR=H\otimes \wR$, with
canonical (complete and separated) filtration $\{H\otimes F^q\}_{q\ge 0}$.
Clearly, $\gr_F^{\bullet}(H\otimes \wR)=\gr_I^{\bullet}(H\otimes \Rr)= H\otimes \gr_I^{\bullet} \Rr$.

The (decreasing, multiplicative) $I$-adic filtration $\{I^q\}_{q\ge 0}$ of $\Rr$
leads to similar constructions, 
$\gr^{\bullet}(\varphi):\gr_I^{\bullet}M\to\gr_I^{\bullet}N$
(for $\varphi:M\to N$ $\Rr$-linear), respectively 
$E_1^{\bullet}(\varphi) :\gr_I^{\bullet}M\to\gr_I^{\bullet+1}N$, when 
$\gr^{\bullet}(\varphi)=0$. When both $M$ and $N$ are finitely generated free
$\Rr$-modules, $\gr_F^{\bullet}(\varphi\otimes_\Rr\wR)=\gr^{\bullet}(\varphi)$.
If in addition $\gr^{\bullet}(\varphi)=0$, then 
$E_1^{\bullet}(\varphi\otimes_\Rr\wR)=E_1^{\bullet}(\varphi)$.

\begin{theorem}\label{th:cpi}
Let $\A$ be a hypersolvable and not supersolvable arrangement. 
Then the $\wR$-module $\pi_p\otimes_\Rr\wR$ is isomorphic to the cokernel of an $\wR$-linear map
\[
D_{p+2}:H_{p+2}Y\otimes\wR\to H_{p+1}(Y,X)\otimes\wR,
\]
with the property that $\gr_F^{\bullet}(D_{p+2})=0$ and 
$E_1^{\bullet}(D_{p+2}):H_{p+2}Y\otimes\gr_I^{\bullet}\Rr \to H_{p+1}(Y,X)\otimes\gr_I^{\bullet+1}\Rr$
acts on the free $\gr_I^{\bullet}\Rr$-generators by
\[
H_{p+2}Y\xrightarrow{\partial_{p+2}}H_{p+1}Y\otimes H_1
\xrightarrow{\Pi_{p+1}\otimes\id}H_{p+1}(Y,X)\otimes H_1,
\]
where $\partial_{p+2}$ is described in \eqref{eq:del}-\eqref{eq:delst},
and $\Pi_{p+1}$ is defined in \eqref{eq:hp} and \eqref{eq:chp}.
\end{theorem}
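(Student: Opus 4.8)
The plan is to start from the presentation in Theorem~\ref{th:pip}, which expresses $\pi_p$ as the cokernel of $\tilde\partial_{p+2} + \tilde j_{p+1}$ on $(H_{p+2}Y\oplus H_{p+1}X)\otimes\Rr$, then apply the completion functor $\widehat{(\cdot)}$. Since completion is right exact (it is exact on the relevant category of finitely generated modules, but right exactness is all we need here) and agrees with $(\cdot)\otimes_\Rr\wR$ on the finitely generated free modules in sight, $\pi_p\otimes_\Rr\wR$ is the cokernel of
\[
(\tilde\partial_{p+2}+\tilde j_{p+1})\otimes_\Rr\wR:(H_{p+2}Y\oplus H_{p+1}X)\otimes\wR\to H_{p+1}Y\otimes\wR.
\]
The idea is now to perform a base change on the source and target over $\wR$ so that the summand $H_{p+1}X\otimes\wR$, mapped in by (the completion of) $\tilde j_{p+1}$, is ``absorbed'' and split off, leaving only a map out of $H_{p+2}Y\otimes\wR$ landing in a complement of $j_{p+1}(H_{p+1}X)\otimes\wR$ inside $H_{p+1}Y\otimes\wR$, which by the split exact sequence~\eqref{eq:hp} is exactly $H_{p+1}(Y,X)\otimes\wR$ via $\Pi_{p+1}$.

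The key technical step is to show that $\widehat{\tilde j}_{p+1}:H_{p+1}X\otimes\wR\to H_{p+1}Y\otimes\wR$, followed by projection onto the $H_{p+1}X\otimes\wR$ factor under a chosen $\Z$-splitting of~\eqref{eq:hp}, is an isomorphism of $\wR$-modules. By the filtered-isomorphism test quoted in Section~\ref{sec:compl}, it suffices to check this after passing to associated graded objects; but $\gr_F^\bullet(\widehat{\tilde j}_{p+1}) = \gr^\bullet(\tilde j_{p+1}) = j_{p+1}\otimes\id$ by minimality, and $j_{p+1}$ is split injective with the chosen splitting realizing the projection as the identity on $H_{p+1}X$. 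Hence the composite is $\id\otimes\id$ on associated graded, so it is an $\wR$-isomorphism. This lets us change coordinates on $H_{p+1}Y\otimes\wR \cong (H_{p+1}X\oplus H_{p+1}(Y,X))\otimes\wR$ so that $\widehat{\tilde j}_{p+1}$ becomes the inclusion of the first summand composed with that isomorphism; quotienting, the cokernel of $(\tilde\partial_{p+2}+\tilde j_{p+1})\otimes\wR$ is identified with the cokernel of the induced map
\[
D_{p+2}:H_{p+2}Y\otimes\wR\to H_{p+1}(Y,X)\otimes\wR,
\]
namely $D_{p+2} = (\Pi_{p+1}\otimes\id)\circ(\text{correction})\circ\widehat{\tilde\partial}_{p+2}$, where the correction term comes from subtracting off the $H_{p+1}X$-component of $\widehat{\tilde\partial}_{p+2}$ via the inverse of the isomorphism above.

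It remains to compute the filtration behaviour of $D_{p+2}$. By minimality, $\widehat{\tilde\partial}_{p+2}$ sends $H_{p+2}Y\otimes F^q$ into $H_{p+1}Y\otimes F^{q+1}$, so $\gr_F^\bullet(\widehat{\tilde\partial}_{p+2})=0$ and $E_1^\bullet(\widehat{\tilde\partial}_{p+2})$ is the map $\partial_{p+2}$ of~\eqref{eq:del}. The correction term, being built from $\widehat{\tilde\partial}_{p+2}$ composed with a filtration-preserving isomorphism, also raises filtration by one, as does $\Pi_{p+1}\otimes\id$ which strictly preserves filtration degree; therefore $\gr_F^\bullet(D_{p+2})=0$, and chasing the composite on $E_1$ gives exactly $(\Pi_{p+1}\otimes\id)\circ\partial_{p+2}$ on the free generators $H_{p+2}Y\otimes 1$, as claimed. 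I expect the main obstacle to be bookkeeping the explicit form of the coordinate change and verifying that the ``correction term'' does not spoil the filtration estimate — in particular one must be careful that the inverse of an $\wR$-isomorphism that is the identity on $\gr_F^\bullet$ still preserves the canonical filtration (which it does, since $\wR$-linear maps between modules with canonical filtrations are automatically filtered), so that composing with it cannot lower filtration degree. Once this is pinned down, the three asserted properties of $D_{p+2}$ follow formally.
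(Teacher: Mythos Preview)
Your proposal is correct and follows essentially the same route as the paper. The only difference is packaging: the paper builds the coordinate change on the target in one step, by showing that the map
\[
\tilde j_{p+1}\otimes_\Rr\wR + \sigma_{p+1}\otimes\id_{\wR}:(H_{p+1}X\oplus H_{p+1}(Y,X))\otimes\wR\to H_{p+1}Y\otimes\wR
\]
is an $\wR$-isomorphism (via the same associated-graded test you invoke), and then simply sets $D_{p+2}=\pr_{p+1}\circ(\tilde\partial_{p+2}\otimes_\Rr\wR)$, where $\pr_{p+1}$ is the quotient by $\im(\tilde j_{p+1}\otimes_\Rr\wR)$, identified with $H_{p+1}(Y,X)\otimes\wR$. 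Since $\gr_F^\bullet(\pr_{p+1})=\Pi_{p+1}\otimes\id$, the filtration claims drop out immediately without needing to isolate and track your ``correction'' term. Your version unwinds the same isomorphism componentwise (first showing the projection of $\widehat{\tilde j}_{p+1}$ onto $H_{p+1}X\otimes\wR$ is invertible, then subtracting the graph component), which is equivalent but slightly more laborious.
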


\begin{proof}
Choose a splitting in \eqref{eq:hp}, 
$\sigma_{\bullet}:H_{\bullet}(Y,X)\hookrightarrow H_{\bullet}Y$.
The $\Rr$-presentation from Theorem~\ref{th:pip} gives a presentation
for $\pi_p\otimes_\Rr\wR$ as the cokernel of the $\wR$-linear map
\begin{equation}\label{eq:cpres}
\tilde\partial_{p+2}\otimes_\Rr\wR + \tilde j_{p+1}\otimes_\Rr\wR 
:(H_{p+2}Y\oplus H_{p+1}X)\otimes\wR\to (H_{p+1}X\oplus H_{p+1}(Y,X))\otimes\wR.
\end{equation}
Consider the $\wR$-linear map
\begin{equation}\label{eq:csig}
\tilde j_{p+1}\otimes_{\Rr} \wR + \sigma_{p+1}\otimes\id_{\wR}: 
(H_{p+1}X\oplus H_{p+1}(Y,X))\otimes\wR\to(H_{p+1}X\oplus H_{p+1}(Y,X))\otimes\wR.
\end{equation}
Since $\gr^{\bullet}\tilde j_{p+1}=j_{p+1}\otimes\id$ and 
$\gr_F^{\bullet}(\sigma_{p+1}\otimes \id_{\wR})=\sigma_{p+1}\otimes\id$,
we infer that \eqref{eq:csig} is an isomorphism, 
by $\wR$-completeness and separation. 

Hence, 
$H_{p+1}Y\otimes\wR\cong \im(\tilde j_{p+1}\otimes_{\Rr} \wR)\oplus\im(\sigma_{p+1}\otimes\id_{\wR})$,
and $H_{p+1}Y\otimes\wR/\im(\tilde j_{p+1}\otimes_{\Rr} \wR)\cong H_{p+1}(Y,X)\otimes\wR$.
Moreover, $\gr_F^{\bullet}(H_{p+1}Y\otimes\wR\xrightarrow{\pr_{p+1}}
H_{p+1}Y\otimes\wR/\im(\tilde j_{p+1}\otimes_{\Rr} \wR))$ is identified with
$H_{p+1}Y\otimes\gr_I^{\bullet}\Rr\xrightarrow{\Pi_{p+1}\otimes\id}
H_{p+1}(Y,X)\otimes\gr_I^{\bullet}\Rr.$

Set $D_{p+2}=\pr_{p+1}\circ(\tilde\partial_{p+2}\otimes_\Rr\wR)$.
Combining \eqref{eq:cpres} and \eqref{eq:csig} we obtain that 
$\pi_p\otimes_\Rr\wR\cong\coker(D_{p+2})$, and 
$\gr_F^{\bullet}(D_{p+2})=(\Pi_{p+1}\otimes\id)\circ\gr^{\bullet}\tilde\partial_{p+2}=0$.
The assertion on $E_1^{\bullet}(D_{p+2})$ follows from \eqref{eq:del}.
\end{proof}

It is now an easy matter to derive $\Rr_q$-presentations for 
$\pi_p\otimes_\Rr \Rr_q=\pi_p/I^q\cdot\pi_p$, for all $1\le q<\infty$.
Note that $\gr_I^s \Rr_q=\gr_I^s \Rr$ for $s<q$, and $\gr_I^s \Rr_q=0$ for $s\ge q$.
Note also that $H_{p+1}(Y,X)\ne 0$, by the definition of $p(\A)$.

\begin{corollary}\emph{(\cite{PS})}\label{cor:Ipi}
If $\A$ is a hypersolvable and not supersolvable arrangement, then
$\gr_I^0 \pi_p= \pi_p/I\cdot\pi_p=H_{p+1}(Y,X)$ does not vanish.
\end{corollary}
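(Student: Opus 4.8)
The plan is to specialize Theorem \ref{th:cpi} to the case $q=1$, where everything collapses to the associated graded level and no subtleties from higher-order terms intervene. Concretely, I would tensor the $\wR$-presentation $D_{p+2}\colon H_{p+2}Y\otimes\wR\to H_{p+1}(Y,X)\otimes\wR$ with $\Rr_1=\Z$ over $\wR$ (equivalently, reduce modulo $F^1=I\cdot\wR$). Since $\pi_p\otimes_\Rr\wR\cong\coker(D_{p+2})$, right-exactness of the tensor product gives $\pi_p\otimes_\Rr\Rr_1\cong\coker(D_{p+2}\otimes_{\wR}\Rr_1)=\coker(D_{p+2}\bmod F^1)$. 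On the other hand, $\pi_p\otimes_\Rr\Rr_1=\pi_p/I\pi_p=\gr_I^0\pi_p$, so it suffices to identify this cokernel.

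Next I would observe that the reduction of $D_{p+2}$ modulo $F^1$ is exactly $\gr_F^0(D_{p+2})$, which is the degree-zero part of $\gr_F^\bullet(D_{p+2})$. But Theorem \ref{th:cpi} asserts precisely that $\gr_F^\bullet(D_{p+2})=0$. Hence $D_{p+2}\bmod F^1$ is the zero map $H_{p+2}Y\otimes\Z\to H_{p+1}(Y,X)\otimes\Z$, and its cokernel is all of $H_{p+1}(Y,X)$. This yields $\gr_I^0\pi_p=\pi_p/I\pi_p\cong H_{p+1}(Y,X)$.

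Finally, to conclude non-vanishing I would invoke the definition of the order of $\pi_1$-connectivity $p=p(\A)$: by construction $H_{p+1}(Y,X)\neq 0$ (this is recorded in the remark immediately preceding the corollary, and it is essentially what singles out the index $p+1$ as the first place where the pair $(Y,X)$ has nontrivial homology). Since $H_{p+1}(Y,X)$ is moreover a nonzero finitely generated free abelian group by \eqref{eq:hp}, the module $\gr_I^0\pi_p$ is nonzero, completing the proof.

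There is essentially no hard part here: the result is a formal consequence of Theorem \ref{th:cpi} together with the combinatorial fact $H_{p+1}(Y,X)\neq0$. The only point requiring a moment's care is making sure that reducing the $\wR$-presentation modulo $F^1$ legitimately computes $\pi_p/I\pi_p$ rather than some completed analogue — this is handled by the natural equivalence, noted in Section \ref{sec:compl}, between $\widehat{(\cdot)}$ and $(\cdot)\otimes_\Rr\wR$ on finitely generated free modules, together with the identity $\wR/F^1=\Rr/I$, so that $\coker(D_{p+2})\otimes_{\wR}\wR/F^1$ agrees with $\coker$ of the original Theorem \ref{th:pip} map reduced modulo $I$.
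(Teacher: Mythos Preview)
Your proposal is correct and follows essentially the same approach as the paper: the authors state the result as an immediate consequence of Theorem~\ref{th:cpi}, obtained by tensoring with $\Rr_1=\Z$ and using $\gr_F^\bullet(D_{p+2})=0$, together with the remark (recorded just before the corollary) that $H_{p+1}(Y,X)\ne 0$ by the definition of $p(\A)$. Your write-up simply makes these steps explicit.
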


\begin{example}
Note that the hypersolvability hypothesis on $\A$ is crucial. 
Indeed, recall from \cite{PS} that by definition $p=p(M'(\A))$ is equal to 
$\sup\{s\mid\dim_{\Q}H_t(M'(\A),\Q)=\dim_{\Q}H_t(\pi_{1}M'(\A),\Q),\forall t\le s\}$.
When $\A$ is hypersolvable, this is equal to 
$p(\A) \coloneqq \sup\{s\mid \rank \operatorname{A}^t(\A)=
\rank \o{\operatorname{A}}^t(\A),\forall t\le s\}$.

Now, let $\A$ be the aspherical Coxeter arrangement of type $D_n, n\ge 4$.
Since the Orlik-Solomon algebra $\OS(\A)$ is not quadratic \cite{F},
$\A$ is not supersolvable \cite{SY} and $2\le p(\A)<\infty$. Clearly $\A$
cannot be hypersolvable, since $\pi_p(M'(\A))=0$.
\end{example}

\begin{corollary}\label{cor:Iqpi}
Let $\A$ be a hypersolvable and not supersolvable arrangement and $q\ge 2$. 
Then $\pi_p/I^q\cdot\pi_p$ is isomorphic over $\Rr_q$ with 
$\coker(D_{p+2}\otimes_{\wR}\Rr_q:H_{p+2}Y\otimes \Rr_q\to H_{p+1}(Y,X)\otimes \Rr_q)$.
Furthermore, $\gr_I^{\bullet}(D_{p+2}\otimes_{\wR}\Rr_q)=0$, and 
$E_1^s(D_{p+2}\otimes_{\wR}\Rr_q):H_{p+2}Y\otimes\gr_I^s \Rr \to H_{p+1}(Y,X)\otimes\gr_I^{s+1}\Rr$
is equal to $E_1^s(D_{p+2})$, for $s<q-1$, and it is $0$, for $s=q-1$.
\end{corollary}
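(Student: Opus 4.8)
The plan is to obtain the $\Rr_q$-presentation from Theorem~\ref{th:cpi} by the base-change functor $-\otimes_{\wR}\Rr_q$, using the fact that $\Rr_q=\wR/F^q$ and that the canonical filtration is complete and separated. First I would note that $\pi_p\otimes_\Rr\Rr_q=(\pi_p\otimes_\Rr\wR)\otimes_{\wR}\Rr_q$, since $\Rr_q$ is already an $\wR$-module and $\Rr\to\wR\to\Rr_q$ realizes the quotient $\Rr\to\Rr/I^q$; this reduces everything to base-changing the cokernel presentation of Theorem~\ref{th:cpi}. Because tensor product is right exact, applying $-\otimes_{\wR}\Rr_q$ to the exact sequence
\[
H_{p+2}Y\otimes\wR\xrightarrow{D_{p+2}}H_{p+1}(Y,X)\otimes\wR\to\pi_p\otimes_\Rr\wR\to 0
\]
yields $\pi_p/I^q\pi_p\cong\coker\bigl(D_{p+2}\otimes_{\wR}\Rr_q\bigr)$, with source $H_{p+2}Y\otimes\Rr_q$ and target $H_{p+1}(Y,X)\otimes\Rr_q$, as claimed.

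Next I would analyse the graded pieces. Since $D_{p+2}$ satisfies $D_{p+2}(H_{p+2}Y\otimes F^s)\subseteq H_{p+1}(Y,X)\otimes F^{s+1}$, the induced map $D_{p+2}\otimes_{\wR}\Rr_q$ respects the filtrations $\{H\otimes(F^s/F^q)\}$ and still shifts filtration by one, hence $\gr_I^{\bullet}(D_{p+2}\otimes_{\wR}\Rr_q)=0$. For the first-order piece, I would use the identification $\gr_I^s\Rr_q=\gr_I^s\Rr$ for $s<q$ and $\gr_I^s\Rr_q=0$ for $s\ge q$ (recorded just before Corollary~\ref{cor:Ipi}): on the generators $H_{p+2}Y\otimes 1$, the map $E_1^s(D_{p+2}\otimes_{\wR}\Rr_q)$ lands in $H_{p+1}(Y,X)\otimes\gr_I^{s+1}\Rr_q$, which equals $H_{p+1}(Y,X)\otimes\gr_I^{s+1}\Rr$ precisely when $s+1<q$, i.e.\ $s<q-1$, and vanishes when $s+1\ge q$, i.e.\ $s=q-1$ (there are no pieces with $s\ge q$, since the source is $0$). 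In the range $s<q-1$ the composite $H_{p+2}Y\xrightarrow{\partial_{p+2}}H_{p+1}Y\otimes H_1\xrightarrow{\Pi_{p+1}\otimes\id}H_{p+1}(Y,X)\otimes H_1$ defining $E_1^s(D_{p+2})$ is unaffected by the quotient $\gr_I^{s+1}\Rr\to\gr_I^{s+1}\Rr_q$ (it is the identity there), so $E_1^s(D_{p+2}\otimes_{\wR}\Rr_q)=E_1^s(D_{p+2})$; for $s=q-1$ the target graded piece is killed, giving the zero map.

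The only point requiring a little care is the compatibility of base change with the $E_1$-construction, i.e.\ that $E_1^{\bullet}$ of a filtered map that shifts degree by one commutes with the filtration-respecting quotient $\wR\to\Rr_q$. This is the analogue, at the level of $\Rr_q$, of the compatibility statements for $\otimes_\Rr\wR$ discussed at the end of Section~\ref{sec:compl}; I would verify it directly from the definitions: a filtered $\wR$-linear map $f$ with $f(M F^s)\subseteq N F^{s+1}$ induces on $M\otimes_{\wR}\Rr_q=M/MF^q$ a map with the same property with respect to the quotient filtration, and $\gr$ of the quotient filtration on $M/MF^q$ agrees with $\gr_F^{\bullet}M$ truncated above degree $q$, so the induced $E_1$ is the truncation of $E_1^{\bullet}(f)$. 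I do not expect a genuine obstacle here; it is a routine bookkeeping of truncated graded modules, and once it is in place the corollary follows immediately by combining it with Theorem~\ref{th:cpi}.
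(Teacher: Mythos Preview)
Your proposal is correct and follows essentially the same route as the paper: tensor the $\wR$-presentation of Theorem~\ref{th:cpi} with $\wR/F^q=\Rr_q$ using right exactness, and then read off the graded and $E_1$ behaviour from the identification $\gr^{\bullet}\Rr_q=\gr^{\bullet}\Rr/\gr^{\ge q}\Rr$. The paper compresses all of your filtration bookkeeping into that single line, but the underlying argument is the same.
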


\begin{proof}
Tensor the $\wR$-presentation from Theorem \ref{th:cpi}, over $\wR$, with $\wR/F^q=\Rr_q$.
The claims on $\gr^{\bullet}$ and $E_1^{\bullet}$ follow from the fact that 
$\gr^{\bullet}\Rr_q=\gr^{\bullet}\Rr/\gr^{\ge q}\Rr$.
\end{proof}

When $q=2$, everything becomes explicit. The exact sequence
\begin{equation}\label{eq:qtwo}
0\to I/I^2\to \Rr/I^2\to \Rr/I \to 0
\end{equation}
has a canonical splitting. Hence, $\Rr_2=\Z\cdot 1\oplus H_1$, where $H_1$
is free abelian, of rank $|\A|$. The $I$-adic filtration is given by
$I^0\cdot \Rr_2=\Rr_2$, $I\cdot \Rr_2=H_1$ and $I^2\cdot \Rr_2=0$. Hence, the filtered
ring $\Rr_2$ is combinatorially determined.

The map $D_{p+2}\otimes_{\wR} \Rr_2: H_{p+2}Y\otimes(\Z\cdot 1\oplus H_1)\to 
H_{p+1}(Y,X)\otimes(\Z\cdot 1\oplus H_1)$ is zero on $H_{p+2}Y\otimes H_1$;
on $H_{p+2}Y\otimes 1\equiv H_{p+2}Y$,
it is equal to $(\Pi_{p+1}\otimes\id)\circ\partial_{p+2}:H_{p+2}Y\to H_{p+1}(Y,X)\otimes H_1$.
Hence, the filtered $\Rr_2$-module 
$\pi_p/I^2\cdot\pi_p$ is combinatorially determined, see \eqref{eq:osm} and \eqref{eq:delst}. In particular,
$\gr_I^1\pi_p$ is combinatorially determined. We will need an explicit
combinatorial description of the second graded piece, $\gr_I^1\pi_p$.
By \eqref{eq:chp} and \eqref{eq:osm}, $\Pi_{p+1}^*:H^{p+1}(Y,X)\to H^{p+1}Y$
is the inclusion,
\begin{equation}\label{eq:Piq2}
\Pi_{p+1}^*:(\I/\I_2)^{p+1}\hookrightarrow(\Lambda/\I_2)^{p+1}.
\end{equation}

We infer from \eqref{eq:delst} that (up to sign) 
\begin{equation}\label{eq:delq2}
\partial_{p+2}^*:(\Lambda/\I_2)^{p+1}\otimes\Lambda^1\to(\Lambda/\I_2)^{p+2}
\end{equation}
is induced by the multiplication map $\mu$ of $\Lambda^{\bullet}$. We thus obtain the following
explicit combinatorial description:
\begin{equation}\label{eq:delPiq2}
\partial_{p+2}^*\circ(\Pi_{p+1}\otimes\id)^*
:(\I/\I_2)^{p+1}\otimes\Lambda^1\xrightarrow{\pm\mu}(\Lambda/\I_2)^{p+2}.
\end{equation}

Using the $\Rr_2$-presentation from Corollary \ref{cor:Iqpi}, we deduce that 
$\gr_I^1\pi_p$ is given by
\begin{align*}
\gr_I^1\pi_p 
&=I\cdot(\pi_p/I^2\cdot\pi_p)=H_{p+1}(Y,X)\otimes H_1/\im(D_{p+2}\otimes_{\wR} \Rr_2)\cap 
(H_{p+1}(Y,X)\otimes H_1) \\
&=\coker((\Pi_{p+1}\otimes\id)\circ\partial_{p+2}:H_{p+2}Y\to H_{p+1}(Y,X)\otimes H_1).
\end{align*}

We summarize our results for $q=2$ as follows.

\begin{theorem}\label{th:cpi2}
Let $\A$ be a hypersolvable and not supersolvable arrangement, and $p=p(\A)$. 
Then the second nilpotent quotient, $\pi_p M'(\A)/I^2\cdot\pi_p M'(\A)$ is 
combinatorially determined as a filtered $\Z\pi_1 M'(\A)/I^2$-module.
The finitely generated abelian group $\gr_I^1\pi_p M'(\A)$ is also 
combinatorially determined, with $\Z$-presentation
\[
\gr_I^1\pi_p M'(\A)=
\coker(H_{p+2}Y\xrightarrow{(\Pi_{p+1}\otimes\id)\circ\partial_{p+2}}H_{p+1}(Y,X)\otimes H_1). 
\]
\end{theorem}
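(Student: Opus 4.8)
The plan is to combine the $\wR$-presentation of $\pi_p\otimes_\Rr\wR$ from Theorem~\ref{th:cpi} with the explicit description of the filtered ring $\Rr_2=\Z\cdot 1\oplus H_1$ recorded above, and then read off the cokernel presentation of $\gr_I^1\pi_p$ in purely combinatorial terms. First I would invoke Corollary~\ref{cor:Iqpi} with $q=2$: this already gives $\pi_p/I^2\cdot\pi_p\cong\coker(D_{p+2}\otimes_\wR\Rr_2)$ as an $\Rr_2$-module, so the only remaining task for the first assertion is to check that the data defining this presentation are combinatorial. The filtered ring $\Rr_2$ is combinatorial because the filtration $I^0\supseteq I\supseteq I^2$ on $\Rr_2$ is given by $\Rr_2,\ H_1,\ 0$, and $H_1=H_1(Y)\cong\Z^{|\A|}$ depends only on the number of hyperplanes; the splitting of \eqref{eq:qtwo} is canonical, so no non-combinatorial choice enters. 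The map $D_{p+2}\otimes_\wR\Rr_2$ vanishes on $H_{p+2}Y\otimes H_1$ by the $E_1^{s}=0$ for $s=q-1$ clause of Corollary~\ref{cor:Iqpi}, and on $H_{p+2}Y\otimes 1$ it equals $E_1^0(D_{p+2})=(\Pi_{p+1}\otimes\id)\circ\partial_{p+2}$ by Theorem~\ref{th:cpi}. Finally, $\partial_{p+2}$ is combinatorial via its dual \eqref{eq:delst}--\eqref{eq:delPiq2}, being (up to sign) the multiplication map of the quadratic Orlik--Solomon algebra, and $\Pi_{p+1}$ is the combinatorial inclusion \eqref{eq:Piq2}; the groups $H_{p+2}Y$ and $H_{p+1}(Y,X)$ are combinatorial since they are the graded pieces of $\QOS(\A)$ and of $\I^\bullet/\I_2^\bullet$ in degrees $p+2$ and $p+1$. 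Hence $\pi_p/I^2\cdot\pi_p$ is combinatorially determined as a filtered $\Rr_2$-module.

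For the statement about $\gr_I^1\pi_p$, I would use that $\gr_I^1$ of a filtered $\Rr_2$-module $N$ is just $I\cdot N$, since $I^2\cdot\Rr_2=0$ forces $I^2\cdot N=0$. Applying this to $N=\pi_p/I^2\cdot\pi_p$ and using $\coker$ exactness: $I\cdot N=(I\cdot(H_{p+1}(Y,X)\otimes\Rr_2))/\bigl(\im(D_{p+2}\otimes\Rr_2)\cap I\cdot(H_{p+1}(Y,X)\otimes\Rr_2)\bigr)$. Now $I\cdot(H_{p+1}(Y,X)\otimes\Rr_2)=H_{p+1}(Y,X)\otimes H_1$, and since $D_{p+2}\otimes\Rr_2$ kills $H_{p+2}Y\otimes H_1$ and sends $H_{p+2}Y\otimes 1$ into $H_{p+1}(Y,X)\otimes H_1$, its image already lies entirely inside $H_{p+1}(Y,X)\otimes H_1$. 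Therefore the intersection is the whole image, and $\gr_I^1\pi_p=\coker\bigl(H_{p+2}Y\xrightarrow{(\Pi_{p+1}\otimes\id)\circ\partial_{p+2}}H_{p+1}(Y,X)\otimes H_1\bigr)$, exactly as claimed. This is the displayed $\Z$-presentation, and since every ingredient was shown combinatorial above, $\gr_I^1\pi_p$ is combinatorially determined as a finitely generated abelian group.

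The calculation is essentially bookkeeping once Theorem~\ref{th:cpi} and Corollary~\ref{cor:Iqpi} are in hand; the one point requiring genuine care is the identification of $E_1^0(D_{p+2})$ with the Orlik--Solomon multiplication, i.e.\ checking that no splitting-dependent correction term survives. The subtlety is that $D_{p+2}=\pr_{p+1}\circ(\tilde\partial_{p+2}\otimes\wR)$ was built from a choice of splitting $\sigma_{p+1}$ of \eqref{eq:hp}, so a priori $E_1^0(D_{p+2})$ could depend on $\sigma_{p+1}$; but the composite $\pr_{p+1}$ on the associated graded is canonically $\Pi_{p+1}\otimes\id$ (independent of $\sigma$), and $\gr^\bullet\tilde\partial_{p+2}=0$ with $E_1^0(\tilde\partial_{p+2})=\partial_{p+2}$ by the minimality discussion of Section~\ref{sec:prel}. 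So the only real work is to make sure the dual identifications \eqref{eq:Piq2}--\eqref{eq:delPiq2} are applied consistently with signs; this is routine but is where an error would most easily creep in.
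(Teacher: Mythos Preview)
Your proposal is correct and follows essentially the same route as the paper: specialize Corollary~\ref{cor:Iqpi} to $q=2$, use the canonical splitting $\Rr_2=\Z\cdot 1\oplus H_1$ to describe $D_{p+2}\otimes_{\wR}\Rr_2$ explicitly (zero on $H_{p+2}Y\otimes H_1$, equal to $(\Pi_{p+1}\otimes\id)\circ\partial_{p+2}$ on $H_{p+2}Y\otimes 1$), and then read off $\gr_I^1\pi_p$ as the displayed cokernel because the image already lies in $H_{p+1}(Y,X)\otimes H_1$. Your added remark that the splitting $\sigma_{p+1}$ does not affect $E_1^0(D_{p+2})$ is a useful clarification, and the only imprecision is cosmetic: $H_{p+2}Y$ and $H_{p+1}(Y,X)$ are the $\Z$-\emph{duals} of $(\Lambda/\I_2)^{p+2}$ and $(\I/\I_2)^{p+1}$, not the graded pieces themselves, though of course this does not affect combinatoriality.
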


\section{Torsion issues}
\label{sec:tors}

In this section, we analyze the torsion of the second graded piece of $\pi_p$.

\begin{theorem}\label{th:tors}
Let $\A$ be a hypersolvable and not supersolvable arrangement, and $p=p(\A)$. Then the following are equivalent:
\begin{enumerate}
\item The second graded piece, $\gr^1_I \pi_p(M'(\A))$, has no torsion.
\item The decomposable Orlik-Solomon algebra, $\DOS(\A)$, is free in degree $\bullet= p+2$.
\item The graded abelian group of indecomposable OS--relations, $(\I /\Lambda^+ \I)^\bullet$ is free in degree $\bullet= p+2$.
\end{enumerate}
\end{theorem}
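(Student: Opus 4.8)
The plan is to establish the chain of equivalences by first reducing $(1)\Leftrightarrow(3)$ to a purely algebraic statement via the presentation of $\gr^1_I\pi_p$ from Theorem~\ref{th:cpi2}, and then proving $(2)\Leftrightarrow(3)$ by a short direct argument about the ideal $\Lambda^+\I$. For $(1)\Leftrightarrow(3)$: by Theorem~\ref{th:cpi2} we have $\gr^1_I\pi_p = \coker\bigl((\Pi_{p+1}\otimes\id)\circ\partial_{p+2}\bigr)$, a cokernel of a map between finitely generated free abelian groups. I would dualize. Using \eqref{eq:chp}, \eqref{eq:osm}, \eqref{eq:Piq2} and the computation \eqref{eq:delPiq2}, the transpose of $(\Pi_{p+1}\otimes\id)\circ\partial_{p+2}$ is identified, up to sign, with the multiplication map
\[
\mu:\ (\I/\I_2)^{p+1}\otimes\Lambda^1\ \longrightarrow\ (\Lambda/\I_2)^{p+2}.
\]
The image of this map is exactly $(\Lambda^+\I)^{p+2}$ computed inside $\Lambda/\I_2$; more precisely it is the image of $(\Lambda^+\I)^{p+2}$ under $\Lambda^{p+2}\twoheadrightarrow(\Lambda/\I_2)^{p+2}$. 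Now the key point is the standard fact that for a homomorphism $\varphi:F\to G$ of finitely generated free abelian groups, $\coker(\varphi)$ is torsion-free if and only if $\coker(\varphi^\ast)$ is torsion-free (both are controlled by the same Smith normal form, or: $\coker(\varphi)$ torsion-free $\iff$ $\im(\varphi)$ is a direct summand $\iff$ $\im(\varphi^\ast)$ is a direct summand). So $\gr^1_I\pi_p$ has no torsion iff $(\Lambda/\I_2)^{p+2}/\im(\mu)$ has no torsion. It remains to identify this last cokernel with $(\I/\Lambda^+\I)^{p+2}$.

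That identification is where some care is needed. Since $\I_2$ is generated by $\I^2$, and $\I^2\subseteq(\Lambda^+\I)$ (degree-$2$ relations are products of degree-$\le 1$ elements — indeed OS-relations live in degree $\ge 2$, so in degree $2$ they are already "boundary" in the appropriate sense, or rather one observes $\I_2^{p+2}\subseteq(\Lambda^+\I)^{p+2}$ directly from the definition of $\I_2$ as the ideal generated by $\I^2$), we get inclusions $\I_2^{p+2}\subseteq(\Lambda^+\I)^{p+2}\subseteq\I^{p+2}$. Hence
\[
(\Lambda/\I_2)^{p+2}\big/\,\mathrm{image\ of\ }(\Lambda^+\I)^{p+2}\ \cong\ \Lambda^{p+2}/(\Lambda^+\I)^{p+2},
\]
and there is a short exact sequence relating $\Lambda^{p+2}/(\Lambda^+\I)^{p+2}$, $\I^{p+2}/(\Lambda^+\I)^{p+2}=(\I/\Lambda^+\I)^{p+2}$, and $\OS^{p+2}(\A)=\Lambda^{p+2}/\I^{p+2}$, the latter being torsion-free by the cited result of Orlik--Solomon. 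From this exact sequence the torsion-freeness of $\Lambda^{p+2}/(\Lambda^+\I)^{p+2}$ is equivalent to that of $(\I/\Lambda^+\I)^{p+2}$. Combining the three identifications yields $(1)\Leftrightarrow(3)$.

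Finally $(2)\Leftrightarrow(3)$: by definition $\dOS(\A)=\Lambda/\Lambda^+\I$, so $\dOS^{p+2}(\A)=\Lambda^{p+2}/(\Lambda^+\I)^{p+2}$. The short exact sequence
\[
0\to(\I/\Lambda^+\I)^{p+2}\to\dOS^{p+2}(\A)\to\OS^{p+2}(\A)\to 0
\]
together with the torsion-freeness of $\OS^{p+2}(\A)$ shows that the torsion subgroup of $\dOS^{p+2}(\A)$ equals the torsion subgroup of $(\I/\Lambda^+\I)^{p+2}$; hence one vanishes iff the other does, giving $(2)\Leftrightarrow(3)$. I expect the main obstacle to be bookkeeping the passage between the presentation computed "modulo $\I_2$" in Theorem~\ref{th:cpi2} and the quotients by $\I$ and by $\Lambda^+\I$ — specifically verifying cleanly that $\I_2^{p+2}\subseteq(\Lambda^+\I)^{p+2}$ and that quotienting $(\Lambda/\I_2)^{p+2}$ by $\im(\mu)$ genuinely collapses $\I_2$ away, so that no spurious torsion is introduced or lost in degree $p+2$; once that is pinned down, the rest is the elementary homological algebra of cokernels and duals over $\Z$.
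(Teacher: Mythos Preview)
Your argument is correct. The overall architecture matches the paper's: dualize the presentation of $\gr^1_I\pi_p$ from Theorem~\ref{th:cpi2} to get the multiplication map $\mu:(\I/\I_2)^{p+1}\otimes\Lambda^1\to(\Lambda/\I_2)^{p+2}$, identify $\coker(\mu)$ with $\dOS^{p+2}(\A)$, and use the split exact sequence $0\to(\I/\Lambda^+\I)^{p+2}\to\dOS^{p+2}(\A)\to\OS^{p+2}(\A)\to 0$ to link (2) and (3).

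The one genuine difference is the torsion criterion. The paper argues field by field: it observes that the source and target of $\mu$ are torsion-free (because $\Lambda/\I_2=\QOS(\A)=\OS(\wA)$), so their $\K$-dimensions are field-independent, and then a finitely generated abelian group is free iff $\dim_\K\coker(\mu)_\K$ does not depend on $\K$. You instead invoke Smith normal form to say directly that $\coker(\varphi)$ is torsion-free iff $\coker(\varphi^\ast)$ is, working entirely over $\Z$. Your route is a bit more self-contained; the paper's route has the side benefit of immediately feeding into the rank formula of Corollary~\ref{rank}, which is phrased in terms of dimensions over an arbitrary field.

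Two minor points of presentation. First, your $(1)\Leftrightarrow(3)$ argument in fact passes through $\dOS^{p+2}(\A)$, i.e.\ through (2), so the subsequent $(2)\Leftrightarrow(3)$ paragraph is redundant; you could streamline by proving $(1)\Leftrightarrow(2)$ and $(2)\Leftrightarrow(3)$. Second, the justification that $\im(\mu)$ equals the image of $(\Lambda^+\I)^{p+2}$ in $\Lambda/\I_2$, and that $\I_2^{p+2}\subseteq(\Lambda^+\I)^{p+2}$, is cleaner than your parenthetical suggests: since $\I$ is an ideal, $\Lambda^{a-1}\cdot\I^{p+2-a}\subseteq\I^{p+1}$ for all $a\ge 1$, hence $(\Lambda^+\I)^{p+2}=\Lambda^1\cdot\I^{p+1}$, and $\I_2^{p+2}=\Lambda^p\cdot\I^2\subseteq\Lambda^1\cdot\I^{p+1}$ follows at once.
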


\begin{proof}
Let $\K$ be a field. We infer from Theorem \ref{th:cpi2} and \eqref{eq:delPiq2} that the $\K$-dual $(\gr^1_I \pi_p) \otimes \K^*$ is isomorphic to 
$\ker (\mu: (\I/\I_2)^{p+1} \otimes \Lambda^1 \rightarrow (\Lambda/\I_2)^{p+2})_{\K}$ over $\K$, where
the subscript $\K$ denotes specialization to $\K$--coefficients. 
Since $\I_2(\A)_{\K}=\I(\hat{\A})_{\K}$ (\cite{SY, JP1}),
both Hilbert series, $( \I /\I_2)^\bullet \otimes \Lambda^1 (t)$ and $(\Lambda /\I_2)^{\bullet}(t)$, are independent of $\K$,
taking into account that Orlik-Solomon algebras are torsion-free \cite{OT}.

Hence, $\gr^1_I \pi_p$ is free if and only if $\dim_{\K}\coker(\mu)_{\K}$ is independent of $\K$, in degree $p+2$. Plainly, 
$\coker(\mu)_{\K}^{p+2}= \dOS^{p+2} (\A) \otimes \K$. Therefore, $(1) \Leftrightarrow (2)$. The split exact sequence
\[
0 \rightarrow (\I /\Lambda^+ \I)^\bullet \rightarrow \DOS (\A) \rightarrow \OS (\A) \rightarrow 0
\]
gives the equivalence $(2) \Leftrightarrow (3)$. 
\end{proof}

In what follows, the subscript $\K$ denotes OS--type objects with coefficients in $\K$.
For an arbitrary arrangement $\A$, set 
$\operatorname{A}_{\K}^{\bullet}(\A)(t) \coloneqq \sum_{m \geq 0}b_m(\A)t^m$; 
this Hilbert series is independent of the field $\K$. Define 
\[
(\I/\Lambda^+\I)^\bullet_{\K}(t) \coloneqq \sum_{m \geq 2}
r_m (\A)_{\K} t^m=(\DOS)_{\K}(\A)(t)-\OS_{\K}(\A)(t).
\]
When we write $r_m(\A)$, we mean that $r_m (\A)_{\K}$ is independent of $\K$.
With this notation, we extract from the proof of Theorem \ref{th:tors} the following.

\begin{corollary}
\label{rank}
Assume that $\A$ satisfies the equivalent properties from Theorem \ref{th:tors}. 
Then $\gr_I^1 \pi_p M'(\A)$ is free, of rank 
$$ |\A|(b_{p+1}\widehat{\A}-b_{p+1} \A)- (b_{p+2}\widehat{\A}-b_{p+2} \A)+ r_{p+2} \A \, ,$$
where $\widehat{\A}$ is the supersolvable deformation of $\A$, constructed in \cite{JP1, JP2}.
\end{corollary}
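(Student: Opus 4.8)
The plan is to compute the rank of $\gr_I^1 \pi_p M'(\A)$ directly from the $\Z$-presentation furnished by Theorem \ref{th:cpi2}, namely
\[
\gr_I^1\pi_p M'(\A)=\coker\bigl(H_{p+2}Y\xrightarrow{(\Pi_{p+1}\otimes\id)\circ\partial_{p+2}}H_{p+1}(Y,X)\otimes H_1\bigr),
\]
using the fact that, under the hypothesis, this cokernel is torsion-free (Theorem \ref{th:tors}), so its rank equals the difference between the rank of the target and the rank of the image of the presentation map. Thus
\[
\rank\gr_I^1\pi_p = \rank\bigl(H_{p+1}(Y,X)\bigr)\cdot\rank(H_1) - \rank\bigl(\im((\Pi_{p+1}\otimes\id)\circ\partial_{p+2})\bigr).
\]
Here $\rank H_1 = |\A|$, and $\rank H_{p+1}(Y,X)$ is the corank of $j_{p+1}$ in the split exact sequence \eqref{eq:hp}, which by \eqref{eq:osm}--\eqref{eq:chp} is $b_{p+1}\wA - b_{p+1}\A$ (the drop in Betti numbers from $Y=M'(\wA)$ to $X=M'(\A)$ in degree $p+1$). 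That accounts for the first summand $|\A|(b_{p+1}\wA - b_{p+1}\A)$.

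The main work is to identify $\rank\bigl(\im((\Pi_{p+1}\otimes\id)\circ\partial_{p+2})\bigr)$. I would dualize: over a field $\K$, the proof of Theorem \ref{th:tors} already identifies the $\K$-corank of this map with $\dim_\K\ker\bigl(\mu:(\I/\I_2)^{p+1}\otimes\Lambda^1\to(\Lambda/\I_2)^{p+2}\bigr)_\K$, equivalently $\dim_\K(\I/\I_2)^{p+1}_\K\cdot|\A| - \dim_\K\im(\mu)_\K$. Since the cokernel $\gr_I^1\pi_p$ is free, its rank is computed by any such $\K$; and since Orlik--Solomon algebras are torsion-free, $\rank\im(\mu)$ can be read off from Hilbert series. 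Concretely, $\im(\mu)_\K^{p+2} = \I_2^{p+2}/(\I_2)^{p+2}\cap\ker = \I_2^{p+2}$ viewed inside $(\Lambda/\I_2)^{p+2}$... more carefully, $\coker(\mu)^{p+2} = \dOS^{p+2}(\A)\otimes\K$ and the source $(\I/\I_2)^{p+1}\otimes\Lambda^1$ has rank $(\rank\I^{p+1} - \rank\I_2^{p+1})\cdot|\A|$, while $\rank\I^{p+1} = b_{p+1}\wA - b_{p+1}\A$ and $\rank\I_2^{p+1} = 0$ because $p=p(\A)$ forces $\I^{p+1}=\I_2^{p+1}$ in the hypersolvable setting (indeed $\QOS$ and $\OS$ agree through degree $p$, and $\I_2$ is generated in degree $2\le p$... this needs the precise statement that $\rank\I^s = \rank\I_2^s$ for $s\le p+1$, which follows from $b_s\wA = b_s\A$ for $s\le p$ together with $\I_2 = \I(\wA)$ after specialization).

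Assembling: the target $(\Lambda/\I_2)^{p+2}$ has rank $b_{p+2}\wA$ (since $\I_2 = \I(\wA)$ rationally, so $\Lambda/\I_2 \cong \OS(\wA)$ rationally); the indecomposable relations contribute $r_{p+2}\A$; and $\OS^{p+2}(\A)$ has rank $b_{p+2}\A$. Tracking the two short exact sequences from the proof of Theorem \ref{th:tors} — namely $0\to(\I/\Lambda^+\I)^\bullet\to\DOS(\A)\to\OS(\A)\to 0$ together with the identification $\DOS^{p+2}\otimes\K = \coker(\mu)_\K^{p+2}$ — yields $\rank\coker(\mu)^{p+2} = b_{p+2}\A + r_{p+2}\A$, hence $\rank\im(\mu)^{p+2} = b_{p+2}\wA - b_{p+2}\A - r_{p+2}\A$. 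Since $\rank\im((\Pi_{p+1}\otimes\id)\circ\partial_{p+2}) = \rank\im(\mu)^{p+2}$ (a map and its transpose have equal rank), substituting into the displayed formula for $\rank\gr_I^1\pi_p$ gives exactly $|\A|(b_{p+1}\wA - b_{p+1}\A) - (b_{p+2}\wA - b_{p+2}\A) + r_{p+2}\A$, as claimed.

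The step I expect to be the main obstacle is the bookkeeping that shows $\rank\I^{p+1} = \rank\I_2^{p+1}$ and, more generally, pinning down $\rank\im(\mu)^{p+2}$ cleanly in terms of the four quantities $|\A|$, $b_{p+1}\wA$, $b_{p+1}\A$, $b_{p+2}\wA$, $b_{p+2}\A$, $r_{p+2}\A$: one must be careful that the identification $\I_2(\A)_\K = \I(\wA)_\K$ is only asserted after field specialization, and that the relevant Hilbert-series identities ($b_s\wA = b_s\A$ for $s\le p$, which is the definition of $p(\A)$) are used only in the ranges where they hold. Once the Hilbert series of $(\I/\I_2)^\bullet\otimes\Lambda^1$ and $(\Lambda/\I_2)^\bullet$ in degrees $p+1$ and $p+2$ are expressed via these Betti numbers — which is exactly the content flagged in the proof of Theorem \ref{th:tors} — the arithmetic is forced, and freeness of $\gr_I^1\pi_p$ guarantees the rational rank is the honest rank.
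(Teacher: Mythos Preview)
Your overall strategy is correct and is exactly what the paper does: it simply says ``extract from the proof of Theorem~\ref{th:tors}'', meaning one reads off the rank of $\gr_I^1\pi_p$ as $\dim_\K\ker(\mu)_\K$ in degree $p+2$, using that $(\I/\I_2)^{p+1}$ has rank $b_{p+1}\wA-b_{p+1}\A$, that $(\Lambda/\I_2)^{p+2}$ has rank $b_{p+2}\wA$, and that $\coker(\mu)^{p+2}=\dOS^{p+2}(\A)$ has rank $b_{p+2}\A+r_{p+2}\A$ by the split sequence $0\to(\I/\Lambda^+\I)^\bullet\to\DOS(\A)\to\OS(\A)\to 0$. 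Your first and third (``Assembling'') paragraphs carry this out correctly.

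However, your second paragraph contains genuine bookkeeping errors that you should fix, even though you abandon them and recover afterward. Specifically: (i) $\rank\I^{p+1}$ is \emph{not} $b_{p+1}\wA-b_{p+1}\A$; rather $\rank\I^{s}=\binom{|\A|}{s}-b_s\A$, and it is $\rank(\I/\I_2)^{p+1}$ that equals $b_{p+1}\wA-b_{p+1}\A$. (ii) $\rank\I_2^{p+1}$ is not $0$ in general. (iii) The claim ``$p=p(\A)$ forces $\I^{p+1}=\I_2^{p+1}$'' is backwards: by the definition of $p(\A)$ one has $b_s\A=b_s\wA$, hence $\I^s=\I_2^s$, only for $s\le p$; degree $p+1$ is precisely the first place where they differ (this is why $H_{p+1}(Y,X)\ne 0$). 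None of these mis-statements is actually used in your final computation, so the argument survives, but the paragraph as written is misleading and should be deleted or corrected; the clean route is the one you take in ``Assembling'', going directly through $\rank(\I/\I_2)^{p+1}=b_{p+1}\wA-b_{p+1}\A$ and $\rank\coker(\mu)^{p+2}=b_{p+2}\A+r_{p+2}\A$.
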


\begin{example}
\label{supersolv}
If $\A$ is supersolvable, then $\DOS (\A)$ has no torsion. Indeed, in this case 
$\OS _{\K}(\A)=\QOS _{\K}(\A)$, according to \cite[Lemma 4.3]{SY}. We deduce that the Hilbert series 
$(\I /\Lambda^+ \I)^\bullet _{\K}(t)=(\I_2 /\Lambda^+ \I_2)^\bullet _{\K}(t)=(\dim_{\K}\I^2_{\K}) \cdot t^2$ is independent of $\K$.

When $\A$ is hypersolvable and $p(\A)=r(\A)-1$, then $\A$ is not supersolvable and $\dOS^{p+2}(\A)$ has no torsion; 
see \cite[Theorem 23]{DP} and Theorem \ref{th:tors}. This happens for instance when $\A$ is hypersolvable 
and $r(\A)=3$.
\end{example}

For the next examples, we need to review some standard constructions and terminology in arrangement theory.  
A subset $C \subseteq \A$ belongs to $\Ca_q(\A)$ (the set of $q$--circuits of $\A$) if and only if $|C|=q$ and the hyperplanes in $C$ 
form a minimally dependent set. We say that $C \subseteq \A$ has a chord, $c \in \A \setminus C$, if there is a partition, 
$C= C' \sqcup C''$, such that both $C' \cup \{c\}$ and $C'' \cup \{c\}$ are dependent subsets.  
Let $\Ca^{NC}_q(\A) \subseteq \Ca_q(\A)$ be the subset of chordless $q$--circuits.

Recall that $\Lambda_{\K}^\bullet$ is the exterior $\K$--algebra on $\A$, as usual. Denote by 
$\delta: \Lambda_{\K}^\bullet \rightarrow \Lambda_{\K}^{\bullet -1}$ the unique degree $-1$ graded algebra derivation taking 
the values $\delta(i)=1$, for all free algebra generators $i \in \A$. Note that $\delta^2=0$. 
For $C=\{i_1, \dots, i_q\} \subseteq \A, \; |C|=q$, denote by $e_C \in \Lambda^q$ the exterior monomial 
$i_1 \cdots i_q$ (which is well defined, up to a sign).

We recall that the Orlik-Solomon ideal $\I$ is generated by $\delta(e_C), \;C \in \Ca_{\bullet}(\A)$. It follows that
$\delta_q : \K-{\rm span} \langle e_C| \; C \in \Ca_{q+1}(\A)\rangle \rightarrow \I_{\K}^q$ induces a surjection, 
$\overline{\delta}_q : \K-{\rm span} \langle e_C| \; C \in \Ca_{q+1}(\A)\rangle \twoheadrightarrow (\I /\Lambda^+ \I)^q_{\K}$, 
for all $q$. The proof of Lemma 2.1 from \cite{CF} shows that the restriction
\begin{equation}
\label{delta_bar}
\overline{\delta}_q : \K-{\rm span}
\langle e_C| \; C \in \Ca_{q+1}^{NC}(\A)\rangle \twoheadrightarrow (\I /\ \Lambda^+ \I)^q_{\K}
\end{equation}
is still onto, for all $q$.

Now, let $\A_{\GG}$ be the {\em graphic arrangement} (see \cite{OT}) associated to the finite simple graph $\GG$, 
with hyperplanes in one to one correspondence with the edges of $\GG$. In this case, the $q$-circuits of $\A_{\GG}$ 
correspond to the $q$-cycles of $\GG$; furthermore, a circuit has a chord if and only if the corresponding cycle has a chord, 
in the sense of graph theory. By \cite[Lemma 6.2]{SS}, the map \eqref{delta_bar} is an isomorphism, for all $q$, when $\A=\A_{\GG}$.

\begin{corollary}
\label{gr1_graphic}
Let $\A=\A_{\GG}$ be hypersolvable and not supersolvable. Then the second graded piece, 
$\gr_I^1 \pi_p M'(\A_{\GG})$, is free, with rank given in 
Corollary \ref{rank}, where $r_{p+2} \A_{\GG}=|\Ca_{p+3}^{NC}(\A_{\GG})|$. Moreover, this rank may be explicitly 
computed from a hypersolvable composition series of the graph $\GG$.
\end{corollary}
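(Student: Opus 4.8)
The plan is to simply assemble the pieces already in place. First I would invoke Theorem \ref{th:tors} together with the displayed surjection \eqref{delta_bar} and the remark following it: for a graphic arrangement $\A_\GG$, the map $\overline{\delta}_q$ restricted to the span of chordless circuits is an \emph{isomorphism} by \cite[Lemma 6.2]{SS}, hence $(\I/\Lambda^+\I)^q_\K$ has $\K$-dimension equal to $|\Ca^{NC}_{q+1}(\A_\GG)|$, which is visibly independent of the field $\K$. In particular $(\I/\Lambda^+\I)^{p+2}_{\K}$ is free of rank $|\Ca^{NC}_{p+3}(\A_\GG)|$, so condition (3) of Theorem \ref{th:tors} holds, and therefore $\gr^1_I \pi_p M'(\A_\GG)$ is torsion-free. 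Its rank is then read off from Corollary \ref{rank}, with the substitution $r_{p+2}\A_\GG = |\Ca^{NC}_{p+3}(\A_\GG)|$ coming from the isomorphism just quoted (note the index shift: $r_{p+2}$ is the rank in cohomological degree $p+2$, corresponding to circuits of size $p+3$).

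Next I would translate the numerics into graph-theoretic terms. Under the dictionary recalled just before the statement, $q$-circuits of $\A_\GG$ are $q$-cycles of $\GG$, and chordless circuits are chordless (i.e.\ induced) cycles of $\GG$; thus $|\Ca^{NC}_{p+3}(\A_\GG)|$ is the number of induced $(p+3)$-cycles in $\GG$, a quantity manifestly computable from $\GG$. Likewise $|\A_\GG|$ is the number of edges of $\GG$, and the Betti numbers $b_m(\A_\GG)$ are the coefficients of the (reduced) characteristic polynomial of the cycle matroid of $\GG$, hence computable from $\GG$; the issue is to do the same for the Betti numbers $b_m(\widehat{\A_\GG})$ of the supersolvable deformation. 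Here I would recall from \cite{JP1, JP2} that the hypersolvable composition series of $\A_\GG$ yields an explicit supersolvable deformation $\widehat{\A_\GG}$, and — crucially for a \emph{supersolvable} arrangement — its Poincar\'e polynomial factors as $\prod_{i}(1+d_i t)$, where the exponents $d_i$ are exactly the ``exponents'' of the composition series, i.e.\ the cardinalities of the successive fibers; these are combinatorially specified by the composition series. Substituting into the formula of Corollary \ref{rank} then gives an expression for the rank purely in terms of the composition series of $\GG$ together with the induced-cycle count, proving the final assertion.

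The main obstacle is the last point: making precise that the relevant numerical invariants of the supersolvable deformation $\widehat{\A_\GG}$ are genuinely extracted from a hypersolvable composition series of $\GG$, rather than merely from $\widehat{\A_\GG}$ as an arrangement. This requires unwinding the construction of $\widehat{\A}$ from \cite{JP1, JP2}: a hypersolvable composition $\A^1\subset\A^2\subset\cdots\subset\A^\ell=\A$ has at each step a set of ``exponents'', and the deformation $\widehat{\A}$ is built so that it is fiber-type with exactly those exponents as fibration data, so that $b_m(\widehat{\A})=e_m(d_1,\dots,d_\ell)$, the $m$-th elementary symmetric function of the fiber cardinalities. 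Granting this standard fact, both $b_{p+1}\widehat{\A_\GG}$ and $b_{p+2}\widehat{\A_\GG}$ are explicit, and the remaining terms ($|\A_\GG|$, $b_{p+1}\A_\GG$, $b_{p+2}\A_\GG$, and $|\Ca^{NC}_{p+3}(\A_\GG)|$) are manifestly graph-combinatorial, completing the proof. I would also point, for illustration, to the graph of Example \ref{extension_result}, where $p<r-1$, to show the formula genuinely applies beyond the maximal-$p$ range of \cite{DP}.
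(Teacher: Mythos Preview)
Your proposal is correct and follows essentially the same route as the paper: you invoke the isomorphism \eqref{delta_bar} for graphic arrangements (via \cite[Lemma 6.2]{SS}) to verify condition (3) of Theorem \ref{th:tors} and compute $r_{p+2}\A_\GG=|\Ca^{NC}_{p+3}(\A_\GG)|$, then read off the rank from Corollary \ref{rank}, and finally note that $b_\bullet(\A_\GG)$ comes from the chromatic polynomial of $\GG$ while $b_\bullet(\widehat{\A_\GG})$ comes from the exponents of the hypersolvable composition series. The paper's proof is the same, only terser (and it cites \cite{OT} and \cite{PS} for the last two facts rather than unwinding them).
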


\begin{proof}
The first assertion follows at once from the isomorphism \eqref{delta_bar}.
As for the second claim, let us examine the Betti numbers, $b_{\bullet}(\A_{\GG})$ and $b_{\bullet}(\widehat{\A}_{\GG})$,
appearing in Corollary \ref{rank}. We know that the Hilbert series of $\OS (\A_{\GG})$ can be computed from the 
chromatic polynomial of $\GG$ \cite{OT}. Finally, the Hilbert series of $\OS (\widehat{\A_{\GG}})$ is determined
by the exponents of a hypersolvable composition series of  $\GG$ \cite{PS}.
\end{proof}

 \vskip .1in
 \begin{pspicture}(10,4)

 \pscircle*(7,1){.07}
 \pscircle*(8,1){.07}
 \pscircle*(6,2){.07}
 \pscircle*(9,2){.07}
 \pscircle*(7,3){.07}
 \pscircle*(8,3){.07}
   
 \psline[linewidth=.5pt](7,1)(6,2)(7,3)(8,3)(9,2)(8,1)(7,1)
 \psline[linewidth=.5pt](6,2)(9,2)
 
 \end{pspicture}

\begin{example}
\label{extension_result}
The graphic arrangement $\A_{\GG}$ associated to the above graph $\GG$ (without triangles) is hypersolvable and not supersolvable, 
with $p(\A_{\GG})=2$ and $\rank(\A_{\GG})=5 > p+1$. Theorem 23 from \cite{DP} cannot be used, but $\gr_I^1 \pi_p M'(\A_{\GG})$ 
can be computed with the aid of Corollary \ref{gr1_graphic}.
\end{example}

\begin{remark}
\label{2-gen}
For a dependent arrangement (i.e., not boolean) define $c(\A)$ to be the smallest integer $q$ for which there is $C \subseteq \A$ 
dependent with $|C|=q$. Equivalently, $\Ca_{c(\A)}(\A) \neq \emptyset$, but $\Ca_{<c(\A)}(\A) = \emptyset$. Note that $c(\A) \geq 3$.  
We recall that an arrangement $\A$ is called $2$--generic when $c \coloneqq c(\A) > 3$. This implies that $\A$ is hypersolvable 
and not supersolvable, of a particular kind: $\pi_1 M'(\A)=\Z^{\A}, \; \QOS(\A)=\Lambda^\bullet, \; p=c-2$. 
Question: is $r_c (\A)_{\K}$ independent  of $\K$?
\end{remark}

\begin{example}
\label{free_gr1}
For an arbitrary arrangement $\A, r_{p+2}(\A)=0$ if $\Ca^{NC}_{p+3}(\A)= \emptyset$ (see \eqref{delta_bar}). When 
$\A$ is hypersolvable and not supersolvable (as in the $2$--generic example \ref{extension_result}), $\gr_I^1 \pi_p$ is free, 
with rank computed in Corollary \ref{rank}.
\end{example}

\begin{remark}
\label{q:arbtors}
Let $\A$ be an arbitrary arrangement. Note that $r_2(\A)_{\K}$ is independent of $\K$, and
$r_m(\A)=0$ for $m>r(\A)$. The first claim is immediate, since $(\I /\ \Lambda^+ \I)^2= \I^2$.
The second assertion follows from \eqref{delta_bar}, since plainly $\Ca_{m+1}(\A)=\emptyset$
for $m>r(\A)$. Question: is $r_m (\A)_{\K}$ independent  of $\K$, for $2<m\le r(\A)$?
\end{remark}

\begin{example}
\label{delta_not_inj}
In the graphic case, the computation of $r_{p+2}(\A)$ from Corollary \ref{gr1_graphic} is a consequence of the 
following two facts that hold on $\K$-span$\langle e_C|\; C \in \Ca^{NC}_{q+1}(\A) \rangle$:
\begin{enumerate}
\item $\ker (\o \delta_q)=\ker (\delta_q)$
\item $\delta_q$ is injective
\end{enumerate}
Assume $c \coloneqq c(\A)>3$. Then $\Ca_{c+1}(\A)=\Ca^{NC}_{c+1}(\A)$. Suppose moreover that $c=r(\A)$, 
and there is $C' \subseteq \A, \; |C'|=c+2$, such that   all $c$--element subsets of $C'$ are independent. 
(Clearly, this cannot happen when $\A$ is a graphic arrangement.) Then the above condition $(2)$ fails in degree $q=c$. 
Indeed, $\delta(e_{C'}) \in \K$-span$\langle e_C|\; C \in \Ca^{NC}_{c+1}(\A) \rangle$ is non--zero, and $\delta^2(e_{C'})=0$. 

We give a simple rank $4$ example illustrating the previously described setting.

Consider the $2$--generic arrangement $\A$ in $\mathbb C^4$ of equation
\[
xyzt(x+y+2z)(x+y+z+t)(x+2y-z+4t)=0,
\]
with $c=4$. Denote by $H$ the hyperplane of equation $x+y+z+t=0$ and by $P$ the hyperplane of equation $x+2y-z+4t=0$. 
Then the  subset of hyperplanes $C'= \{x,y,z,t,H,P\}$ has the property that all its $4$--element subsets are
independent, as needed.
\end{example}

\begin{example}
\label{non-Fano}
If $r_q (\A)_{\K}\le 1$ and $|\Ca^{NC}_{q+1}(\A)|>1$, property $(1)$ from Example \ref{delta_not_inj}
is also violated. Indeed, it would imply that $\im (\delta_q)$ is at most one-dimensional, which clearly forces
$|\Ca^{NC}_{q+1}(\A)| \le1$. 

Here is a simple example where this happens. Let $\A$ be the $2$--generic arrangement in $\mathbb C^4$ of equation
\[
xyzt(x+y+z+t)(x-y-z+t)=0,
\]
with $c=4$. Denote by $H$ and $P$ the last two hyperplanes. It is easy to check that $\Ca_5^{NC} (\A)$ has
two elements, namely $C_5= \{x,y,z,t,H\}$ and $C'_5= \{x,y,z,t,P\}$. Since the subsets $C_4= \{y,z,H,P\}$ and
$C'_4= \{x,t,H,P\}$ are $4$--circuits and 
$\delta(e_{C_5})-\delta(e_{C'_5})= (e_x-e_t)\cdot \delta(e_{C_4})+ (e_y-e_z)\cdot \delta(e_{C'_4})$,
we infer from \eqref{delta_bar} that $r_4 (\A)_{\K}\le 1$, as needed.
\end{example}

\begin{ack}
We are grateful to Alex Suciu, for illuminating conversations about
$\DOS(\A)$, and for the cornucopia of examples from \cite{S}.
\end{ack}

%%%%%%%%%%%%%% references %%%%%%%%%

\newcommand{\arxiv}[1]
{\texttt{\href{http://arxiv.org/abs/#1}{arxiv:#1}}}
\renewcommand{\MR}[1]
{\href{http://www.ams.org/mathscinet-getitem?mr=#1}{MR#1}}
\newcommand{\doi}[1]
{\texttt{\href{http://dx.doi.org/#1}{doi:#1}}}

\end{document}